


\documentclass[]{article}

\usepackage [T2A] {fontenc}
\usepackage [utf8] {inputenc}
\usepackage[russian]{babel}

\usepackage{mathtools}
\mathtoolsset{showonlyrefs}

\usepackage{indentfirst} 
\usepackage[14pt]{extsizes}
\usepackage{amssymb}
\usepackage{amsmath}
\usepackage{amsthm}
\usepackage{pifont}
\usepackage{subfig}
\usepackage{graphicx}
\usepackage[colorinlistoftodos]{todonotes}
\usepackage{makecell}
\usepackage{multirow}
\usepackage{booktabs}
\usepackage[colorlinks=true, allcolors=blue]{hyperref}
\usepackage{algorithm}
\usepackage{algorithmic}
\usepackage{natbib}

\newtheorem{theorem}{Теорема}
\newtheorem{corollary}{Следствие}[theorem]
\newtheorem{lemma}{Лемма}

\newtheorem{assumption}{Предположение}
\newtheorem{example}{Пример}
\newtheorem{remark}{Замечание}



\usepackage{geometry}
\geometry{left=3cm}
\geometry{right=1.5cm}
\geometry{top=2cm}
\geometry{bottom=2cm}



\allowdisplaybreaks

\begin{document}
\renewcommand{\abstractname}{\vspace{-\baselineskip}}

$$
\\\\
$$


\begin{center}
\Large{\textbf{Эффективный метод с компрессией для распределенных и федеративных кокоэрсивных вариационных неравенств}}
\end{center}

\begin{center}
\textbf{Д.\,О.~Медяков$^{1,2}$, Г.\,Л.~Молодцов$^{1,2}$, А.\,Н.~Безносиков$^{1,2,3}$}

\textit{
\textsuperscript{1} Институт системного программирования Российской академии наук, Москва, Россия \\
\textsuperscript{2} Московский физико-технический институт (национальный исследовательский университет), Москва, Россия \\
\textsuperscript{3} Университет Иннополис, Иннополис, Россия
}


\end{center}

\begin{abstract}
\begin{center}
    \textbf{Нотация}
\end{center}
\noindent Вариационные неравенства как эффективный инструмент для решения прикладных задач, в том числе задач машинного обучения, в последние годы привлекают всё больше внимания исследователей. Области применения вариационных неравенств охватывают широкий спектр направлений — от обучения с подкреплением и генеративных моделей до традиционных приложений в экономике и теории игр. В то же время, невозможно представить современный мир машинного обучения без подходов распределенной оптимизации, которые позволяют значительно ускорить процесс обучения на огромных объемах данных. Однако, сталкиваясь с большими затратами на коммуникации между устройствами в вычислительной сети, научное сообщество стремится к разработке подходов, делающих вычисления дешевыми и стабильными. В этой работе исследуется техника сжатия передаваемой информации применимо к задаче распределенных вариационных неравенств. В частности, предлагается метод на основе продвинутых техник, исконно разработанных для задач минимизации. Для нового метода приводится исчерпывающий теоретический анализ сходимости для кокоэрсивных сильно монотонных вариационных неравенств. Проведенные эксперименты подчеркивают высокую производительность представленной техники и подтверждают практическую применимость.
\end{abstract}

\section{Введение}
Вариационные неравенства (ВН) привлекают внимание исследователей в различных областях уже более полувека \citep{browder1965nonexpansive}. В данной работе рассматривается общая постановка задачи вариационных неравенств на неограниченном множестве:
\begin{align}\label{eq:vi_setup}
    \text{Найти~} z^*\in\mathbb R^d \text{~такое, что~} F(z^*) = 0,
\end{align}
где $F:\mathbb R^d \rightarrow \mathbb R^d$ -- заданный оператор. Такая общая постановка покрывает множество известных задач. Приведем несколько примеров.
\begin{example}\label{ex:1}
    Классическая задача минимизации:
    \begin{align*}
        \underset{z\in\mathbb R^d}{\min}~ f(z),
    \end{align*}
    где $f(z)$ -- некоторая целевая функция. Для того, чтобы свести задачу \eqref{eq:vi_setup} к задаче минимизации, достаточно рассмотреть оператор вида $F(z) = \nabla f(z)$. Более того, для выпуклых функций $f(z)$ точка $z^*$ будет решением задачи \eqref{eq:vi_setup} тогда и только тогда, когда она будет решением этой задачи.
\end{example}
\begin{example}\label{ex:2}
    Задача поиска седловой точки (минимаксная задача):
    \begin{align*}
        \underset{x\in\mathbb R^d}{\min}\underset{y\in\mathbb R^d}{\max} ~g(x, y),
    \end{align*}
    где $g(z) = g(x, y)$ -- некоторая целевая функция. Для того, чтобы свести задачу \eqref{eq:vi_setup} к минимаксной задаче, достаточно рассмотреть оператор вида $F(z) = F(x, y) = \left[\nabla_x g(x, y), -\nabla_y g(x, y)\right]$. Более того, для выпуклых-вогнутых функций $g(x, y)$ точка $z^* = (x^*, y^*)$ будет решением задачи \eqref{eq:vi_setup} тогда и только тогда, когда она будет решением минимаксной задачи.
\end{example}

Тем не менее, несмотря на общность постановки, она практически неприменима в современных прикладных задачах, в первую очередь в задачах обучения. Дело в том, что считать полное значение оператора на одном вычислительном устройстве слишком затратно по времени из-за огромных объемов тренировочных данных. Поэтому на практике используют распределенные подходы \citep{kairouz2021advances,li2020federated,verbraeken2020survey}. В такой парадигме используется сеть из нескольких устройств, каждое из которых содержит часть тренировочной выборки. Эти устройства обычно объединены в звездную топологию, где крайние узлы вычисляют значение оператора исходя из хранящейся локально информации, а сервер агрегирует полученные данные и производит обновление оптимизационных переменных. Отдельно выделяют постановку федеративного обучения \citep{konevcny2016federated, smith2017federated, mcmahan2017communication}. Она подразумевает наличие на каждом устройстве уникальных тренировочных выборок, причем выборки на разных устройствах необязательно одинаково распределены и, как правило, содержат приватную информацию. 

Таким образом, чтобы формально перейти к общей распределенной постановке, представим оператор $F$ в виде конечной суммы:
\begin{align}
\label{eq:finite-sum}
    F(z) = \frac{1}{n}\sum\limits_{i=1}^n F_i(z),
\end{align}
где $n$ -- количество устройств в вычислительной сети, а $F_i(\cdot)$ -- локальный оператор, вычисляемый на основе данных на $i$-ом устройстве. Комбинация \eqref{eq:vi_setup} и \eqref{eq:finite-sum} отражает всевозможный спектр задач распределенного машинного обучения от простых регрессий до нейронных сетей \citep{lecun2015deep}. Несмотря на то, что такой подход применим к классической задачи минимизации (Пример \ref{ex:1}), больший интерес он вызывает для поиска седловых точек (Пример \ref{ex:2}) \citep{beznosikov2020distributed}. На практике это нашло особенно яркое применение в так называемом состязательный подходе, будь то обучение генеративных сетей (GAN) \citep{goodfellow2020generative}, или робастное обучение моделей \citep{liu2020adversarial, zhu2019freelb}. Кроме того, вариационные неравенства также находят широкое применение в различных классических задачах, включая эффективное матричное разложение \citep{bach2008convex}, устранение зернения в изображениях \citep{esser2010general, chambolle2011first}, робастную оптимизацию \citep{ben2009robust}, постановки из экономики, теории игр \citep{von1953theory} и оптимального управления \citep{facchinei2003finite}.

Для решения такой задачи ВН $\eqref{eq:vi_setup} + \eqref{eq:finite-sum}$ необходима адаптация классических методов оптимизации, например, таких как градиентный спуск. Ее можно осуществить по аналогии с Примером \ref{ex:1}, рассмотрев $\nabla f(\cdot)\rightarrow F(\cdot)$. Однако, из-за распределенной постановки \eqref{eq:finite-sum}, чтобы собрать полное значение оператора $F(\cdot)$, вычислительным устройствам необходимо "общаться" друг с другом, как правило, посредством пересылки данных на сервер. Подобные затраты на коммуникацию представляют собой серьезное ограничение распределенных и федеративных подходов. Передача информации часто занимает много времени и нарушает процесс обучения. В некоторых случаях временные затраты могут значительно превышать сложность локальных вычислений, что делает архитектурные решения неэффективными для практического применения. 

Для снижения издержек на обмен информацией между узлами были предложены различные методы уменьшения количества передаваемых данных \citep{kovalev2022optimal, medyakov2023optimal}. В данной работе исследуется применимость техники компрессии к распределенным вариационным неравенствам, сочетая преимущества сжатия и эффективной обработки данных для минимизации коммуникационных затрат и повышения устойчивости алгоритмов.

\section{Обзор литературы}

\subsection{Методы со сжатием}

Для преодоления возникающих проблем с коммуникационными затратами сообщество применяет различные методики. Идея уменьшения количества передаваемой информации в векторах градиентов была исследована в работе \citep{nesterov2012efficiency}. Авторы предлагают модификацию градиентного спуска, при которой на каждом шаге обновляются только некоторые случайные координаты. Такие операторы впоследствии стали называть \textsc{Rand-K} \citep{beznosikov2023biased}. Через несколько лет метод координатного градиентного спуска был адаптирован для распределенной оптимизации \citep{richtarik2016distributed}. Альтернативной методикой борьбы с коммуникационными издержками является техника сжатия передаваемой информации, основанная на использовании квантизации градиентов или параметров моделей \citep{suresh2017distributed}. Данные методы направлены на уменьшение объема данных, за счет ограничения количества бит, необходимых для представления чисел с плавающей точкой \citep{wu2018error, wangni2018gradient, wen2017terngrad}. Обобщение техники сжатия (будь то с помощью выбора координат или квантизации) было сделано в работе \citep{alistarh2017qsgd} посредством введения общего определения оператора сжатия. Авторы предлагают добавить оператор в обычный градиентный спуск.
Однако этот метод не сходится к истинному решению, а лишь к некоторой окрестности, зависящей от дисперсии квантизованных оценок градиентов \citep{gorbunov2021distributed}.


Принимая во внимание данную проблему, сообщество исследователей в области оптимизации и машинного обучения активно разрабатывает распределенные алгоритмы, применяющие технику уменьшения дисперсии. В стандартной нераспределенной задаче стохастической оптимизации она была использована в методах \textsc{SVRG} \citep{johnson2013accelerating}, \textsc{SAG} \citep{roux2012stochastic} and \textsc{SAGA} \citep{defazio2014saga}, \textsc{SARAH} \citep{nguyen2017sarah, beznosikov2024random}, а затем адаптирована для распределенной оптимизации в \textsc{DIANA} \citep{mishchenko2024distributed} путем сжатия не самого градиента, а разности градиентов. В дальнейшем, идея была развита и использована в более продвинутых алгоритмах: \textsc{VR-DIANA} \citep{horvoth2022natural}, \textsc{FedCOMGATE} \citep{haddadpour2021federated}, \textsc{FedSTEPH} \citep{das2022faster}.
Одной из наиболее продвинутых методик применения данных техник в невыпуклой распределенной оптимизации является \textsc{MARINA} \citep{gorbunov2021marina}. В отличие от всех известных подходов, использующих несмещенные операторы сжатия, \textsc{MARINA} базируется на смещенной оценке градиента. Тем не менее, доказано, что данный метод обеспечивает гарантии сходимости, превосходящие все ранее известные техники.

\subsection{Методы решения вариационных неравенств}
Применение различных методов для решения задач вариационных неравенств и задач седловых точек является предметом обширных исследований \citep{juditsky2011solving, gidel2018variational, hsieh2019convergence, mishchenko2020revisiting, hsieh2020explore, gorbunov2022stochastic, beznosikov2023smooth, beznosikov2024first, solodkin2024methods}. Упомянутая выше техника редукции дисперсии была также перенесена и на вариационные неравенства \citep{palaniappan2016stochastic,chavdarova2019reducing,Yura2021,alacaoglu2021stochastic, kovalev2022optimalvi, beznosikov2022unified, pichugin2023optimal, pichugin2024method, medyakovshuffling}. Большинство из этих методов основаны на подходе \textsc{SVRG}, однако некоторые исследования были проведены в анализе более привлекательного с практической точки зрения для задач минимизации метода \textsc{SARAH} \citep{chen2022faster, beznosikov2023sarah}. 

Методы борьбы за эффективность процесса общения также исследовались в общности вариационных неравенств \citep{liu2019decentralized, liu2020decentralized, tsaknakis2020decentralized, beznosikov2020distributed, deng2021local, beznosikov2021distributed, beznosikov2022decentralized}, в том числе и алгоритмы с компрессией. Для липшицевых операторов это было сделано в работах \citep{beznosikov2022distributed, beznosikov2022compression, beznosikov2024similarity} на основе редукции дисперсии из работы \citep{alacaoglu2021stochastic}. 

Говоря о стандартных предположениях, которые используются в анализе методов решения вариационных неравенств, кокоэрсивный режим является одним из наиболее часто встречающихся, несмотря на то, что данное требование является более строгим нежели липшицевость операторов \citep{loizou2021stochastic}. В частности для кокоэрсивных вариационных неравенств существует общий анализ стохастических методов \citep{beznosikov2023stochastic}, включающий и методы со сжатием.

В разрезе работ, использующих предположение кокоэрсивности, стоит выделить основанный на алгоритме \textsc{SARAH} метод \citep{beznosikov2023sarah}. В то же время метод \textsc{MARINA}, как раз и базирующийся на \textsc{SARAH}, еще не были исследованы в контексте вариационных неравенств. Шаг \textsc{MARINA} по факту является шагом \textsc{SARAH} с добавлением сжатия. Как уже отмечалось ранее, метод \textsc{MARINA} являются наиболее привлекательными с точки зрения уменьшения коммуникационных издержек в парадигме распределенного обучения. Цель данной статьи -- исследовать теоретическую и практическую применимость алгоритма \textsc{MARINA} к кокоэрсивным вариационным неравенствам.

\section{Основные результаты}
\begin{itemize}
\item \textbf{Новый метод.} В рамках исследования распределенной постановки представляется новый метод, который использует метод с компрессией \textsc{MARINA} для решения задач вариационных неравенств. 
\item {\bfseries Теоретическая значимость.} В работе представлен полный теоретический анализ предложенного метода для кокоэрсивных монотонных вариационных неравенств.
\item \textbf{Адаптивный анализ.} Данное исследование предлагает несколько возможных расширений. Например, полученные результаты могут быть обобщены для случая произвольного оператора квантования и различных размеров батчей, используемых клиентами.
\item \textbf{Экспериментальная валидация.} Проведены численные эксперименты, которые подчеркивают применимость нашего метода с различными техниками сжатия (квантизацией, выбором координат).
\end{itemize}

\section{Постановка}



Напомним, что рассматривается задачу \eqref{eq:vi_setup}, где оператор $F$ принимает вид \eqref{eq:finite-sum}. Введем следующие предположения на оператор.

\begin{assumption}[Кокоэрсивность] \label{as:coerc}
Каждый оператор $F_i$ является $\ell$-кокоэрсивным, если для любых $u, v \in \mathbb R^d$ выполняется
\begin{equation*}
\label{eq:Lipsh}
\| F_i(u)-F_i(v) \|^2  \leq \ell \langle F_i(u)-F_i(v) , u - v\rangle.
\end{equation*}
\end{assumption}
Это предположение является более сильным аналогом предположения на липшицевость $F_i$. Для задач выпуклой минимизации $\ell$-липшицевость и $\ell$-кокоэрсивность эквивалентны. Сравнение этих предположений для задачи вариационных неравенств приведено в работе \citep{loizou2021stochastic}.

\begin{assumption}[Сильная монотонность]\label{as:strmon}
Оператор $F$ является $\mu$-сильно монотонным, то есть для любых $u, v \in \mathbb R^d$ выполняется
\begin{equation*}
\label{eq:strmon}
\langle F(u) - F(v), u - v \rangle \geq \mu \| u-v\|^2.
\end{equation*}
\end{assumption}
Для задач минимизации это свойство означает сильную выпуклость, а для задач поиска седловой точки -- сильную выпуклость - сильную вогнутость.
\begin{assumption}\label{as:bias}
    Отображение $\mathcal{C}: \mathbb R^d \rightarrow \mathbb R^d$ является несмещенным компрессором, если для любого $u\in\mathbb R^d$ существует $\alpha > 0$ такое, что выполняется
    \begin{align*}
        &\mathbb E\left[\mathcal{C}(u)\right] = u,\\
        &\mathbb E\left\|\mathcal{C}(u) - u\right\|^2 \leqslant \alpha \|u\|^2.
    \end{align*}
\end{assumption}
Это классическое предположение на компрессоры, используемые как в статье \textsc{MARINA} \citep{gorbunov2021marina}, так и в других работах \citep{alistarh2017qsgd, gorbunov2020unified}.
\begin{assumption}\label{as:comp}
    Компрессор $\mathcal{C}(\cdot)$ оставляет долю информации равную $\delta \in (0, 1]$.
\end{assumption}
Например, для компрессоров, которые производят выбор только части координат, это предположение означает, что из $d$ координат, они оставят только $d\delta$.

\section{Метод и анализ сходимости}

Для решения общей постановки задачи вариационных неравенств с липшицевыми операторами обычно используют методы, основанные на подходе \textsc{Extragradient} \citep{juditsky2011solving}. Однако в этой работе рассматриваются кокоэрсивные вариационные неравенства, поэтому достаточно использовать классический подход \textsc{SGD} \citep{robbins1951stochastic, moulines2011non}. Комбинируя его с продвинутой техникой сжатия \textsc{MARINA}, представляем формальное описание рассматриваемого алгоритма (Алгоритм \ref{alg:marina}). Отметим, что наш подход несколько отличается от предложенного в оригинальной статье \citep{gorbunov2021marina}. Там предлагается производить рестарт рекурсивных обновлений аппроксимаций локальных градиентов с некоторой заранее выбранной вероятностью, здесь же это делается раз в фиксированное число итераций, которое называется эпохой (строка \ref{alg:marina:line6} в Алгоритме \ref{alg:marina}).
Далее приведен теоретический анализ сходимости Алгоритма \ref{alg:marina}.

\begin{algorithm}[ht]
	\caption{\foreignlanguage{russian}{\textsc{MARINA} для кокоэрсивных вариационных неравенств}}
	\label{alg:marina}
	\begin{algorithmic}[1]
\STATE
\noindent {\bf Параметры:}  Шаг обучения $\gamma>0$, количество итераций обучения $K$.\\
\noindent {\bf Инициализация:} $z^0 \in \mathbb R^d$.
\FOR {$s=1, 2, \ldots, S$}
    \STATE $z^0 = \tilde z^{s-1}$
    \STATE $g^0 = F(z^0)$
    \STATE \label{alg:marina:line6}$z^1 = z^0 - \gamma g^0$
    \FOR {$k=1, 2, \ldots, K-1$}
        \STATE Отправить $g^{k-1}$ на каждое устройство 
        \FOR {$i = 1, 2, \ldots, n$}
            \STATE $g_i^{k} = g^{k-1} + \mathcal{C}\left(F_i(z^{k}) - F_i(z^{k-1})\right)$
            \STATE Отправить $g_i^k$ на сервер
        \ENDFOR
        \STATE $g^k = \frac{1}{n}\sum\limits_{i=1}^n g_i^k$
        \STATE $z^{k+1} = z^k - \gamma g^k$
    \ENDFOR
    \STATE $\tilde z^s = z^K$
\ENDFOR

\end{algorithmic}
\end{algorithm}

\begin{lemma} \label{lem:1}
Пусть выполнены Предположения \ref{as:coerc}, \ref{as:strmon}, \ref{as:bias}. Тогда для Алгоритма \ref{alg:marina} c $\gamma \leqslant\frac{1}{2\ell(1+\frac{\alpha}{n})}$ верна следующая оценка:
\begin{eqnarray*}
    \mathbb E\| g^{K} \|^2 \leqslant \left(1 - \frac{2\gamma\mu}{3}\right)^{K}\mathbb E\|F(z^0)\|^2.
\end{eqnarray*}
\end{lemma}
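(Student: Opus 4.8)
The plan is to track the quantity $\mathbb E\|g^k\|^2$ along one epoch and show it contracts by the factor $(1-\tfrac{2\gamma\mu}{3})$ at each step, so that after $K$ steps we reach $(1-\tfrac{2\gamma\mu}{3})^K\mathbb E\|g^0\|^2$, and then use $g^0 = F(z^0)$. The natural auxiliary object is the compression error: writing $g^k = g^{k-1} + \tfrac1n\sum_i \mathcal C\big(F_i(z^k)-F_i(z^{k-1})\big)$, the unbiasedness in Assumption~\ref{as:bias} gives $\mathbb E_k[g^k] = g^{k-1} + \tfrac1n\sum_i (F_i(z^k)-F_i(z^{k-1})) = g^{k-1} + F(z^k) - F(z^{k-1})$. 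Since the compressions on different devices are independent, the variance bound gives $\mathbb E_k\|g^k - \mathbb E_k[g^k]\|^2 \le \tfrac{\alpha}{n^2}\sum_i \|F_i(z^k)-F_i(z^{k-1})\|^2$.

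First I would split $\mathbb E_k\|g^k\|^2 = \|g^{k-1} + F(z^k) - F(z^{k-1})\|^2 + \tfrac{\alpha}{n^2}\sum_i\|F_i(z^k)-F_i(z^{k-1})\|^2$. The cross term in the first square contains $\langle g^{k-1}, F(z^k)-F(z^{k-1})\rangle$; since $z^k - z^{k-1} = -\gamma g^{k-1}$, this equals $-\tfrac1\gamma\langle F(z^k)-F(z^{k-1}), z^k-z^{k-1}\rangle$, which by $\mu$-strong monotonicity (Assumption~\ref{as:strmon}) is $\le -\tfrac{\mu}{\gamma}\|z^k-z^{k-1}\|^2 = -\mu\gamma\|g^{k-1}\|^2$. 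For the "second-order" pieces $\|F(z^k)-F(z^{k-1})\|^2$ and each $\|F_i(z^k)-F_i(z^{k-1})\|^2$, I would invoke $\ell$-cocoercivity (Assumption~\ref{as:coerc}): $\|F_i(z^k)-F_i(z^{k-1})\|^2 \le \ell\langle F_i(z^k)-F_i(z^{k-1}), z^k-z^{k-1}\rangle$; summing over $i$ and using $z^k-z^{k-1}=-\gamma g^{k-1}$ turns $\tfrac1n\sum_i\|F_i(z^k)-F_i(z^{k-1})\|^2$ into something bounded by $-\ell\gamma\langle F(z^k)-F(z^{k-1}),\, g^{k-1}\rangle$ — but that inner product can itself be bounded using the same monotonicity trick, or more simply one keeps $\tfrac1n\sum_i\|F_i(z^k)-F_i(z^{k-1})\|^2$ as a single nonnegative bookkeeping term $\mathcal E_k$. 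Collecting everything, $\mathbb E_k\|g^k\|^2 \le (1-2\mu\gamma)\|g^{k-1}\|^2 + \|F(z^k)-F(z^{k-1})\|^2 + \tfrac{\alpha}{n}\mathcal E_k$, and cocoercivity plus Cauchy–Schwarz (or convexity $\|\tfrac1n\sum a_i\|^2\le\tfrac1n\sum\|a_i\|^2$) bounds $\|F(z^k)-F(z^{k-1})\|^2 \le \mathcal E_k$, while $\mathcal E_k \le \ell\gamma\,\|g^{k-1}\|\cdot\|\,\cdot\|$-type estimates ultimately give $\mathcal E_k \le \ell\gamma\cdot(\text{something})$; the clean route is $\mathcal E_k \le \tfrac1n\sum_i\ell\langle F_i(z^k)-F_i(z^{k-1}),-\gamma g^{k-1}\rangle = -\ell\gamma\langle F(z^k)-F(z^{k-1}),g^{k-1}\rangle \le \ell\gamma\|g^{k-1}\|\,\|F(z^k)-F(z^{k-1})\|$, which combined with $\|F(z^k)-F(z^{k-1})\|^2\le\mathcal E_k$ yields $\mathcal E_k \le \ell^2\gamma^2\|g^{k-1}\|^2$.

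Substituting, $\mathbb E_k\|g^k\|^2 \le \big(1 - 2\mu\gamma + (1+\tfrac{\alpha}{n})\ell^2\gamma^2\big)\|g^{k-1}\|^2$. The stepsize restriction $\gamma \le \tfrac{1}{2\ell(1+\alpha/n)}$ forces $(1+\tfrac{\alpha}{n})\ell^2\gamma^2 \le \tfrac{\ell\gamma}{2} \le \tfrac{\mu\gamma}{2}\cdot\tfrac{\ell}{\mu}$ — here one must be a little careful; the factor $\tfrac23$ in the statement suggests the authors instead bound $(1+\tfrac{\alpha}{n})\ell^2\gamma^2 \le \tfrac{\mu\gamma}{3}$ using $\ell\gamma(1+\tfrac\alpha n)\le\tfrac12$ and $\mu\le\ell$, so that the per-step factor is $1-2\mu\gamma+\tfrac{\mu\gamma\cdot?}{?}$ — getting this arithmetic to land exactly on $1-\tfrac{2\gamma\mu}{3}$ is the one delicate point. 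Once $\mathbb E_k\|g^k\|^2 \le (1-\tfrac{2\gamma\mu}{3})\|g^{k-1}\|^2$ holds, I take total expectation, iterate from $k=K$ down to $k=1$, and note that at the start of the epoch line~\ref{alg:marina:line6} and $g^0 = F(z^0)$ give exactly $\|g^0\|^2 = \|F(z^0)\|^2$, yielding the claim. I expect the main obstacle to be pinning down the constants so the contraction factor is exactly $1-\tfrac{2\gamma\mu}{3}$ rather than some other admissible constant — the structural estimates (monotonicity for the linear term, cocoercivity for the quadratic terms, independence for the compression variance) are routine, but balancing $-2\mu\gamma$ against the positive $O(\gamma^2)$ remainder under the given stepsize bound requires the inequality $\mu \le \ell(1+\alpha/n)$ and a careful split of the $-2\mu\gamma$ budget.
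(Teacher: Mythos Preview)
Your skeleton is right: the bias--variance split of $\mathbb E_k\|g^k\|^2$, the independence of compressors across devices, and the recursion back to $g^0=F(z^0)$ are exactly how the argument should run. The gap is in how you handle the cross term. You spend the \emph{entire} inner product $2\langle g^{k-1},F(z^k)-F(z^{k-1})\rangle=-\tfrac{2}{\gamma}\langle z^k-z^{k-1},F(z^k)-F(z^{k-1})\rangle$ on strong monotonicity, obtaining $-2\mu\gamma\|g^{k-1}\|^2$, and then bound the leftover quadratic terms by $(1+\tfrac{\alpha}{n})\mathcal E_k\le (1+\tfrac{\alpha}{n})\ell^2\gamma^2\|g^{k-1}\|^2$ via cocoercivity and Cauchy--Schwarz. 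That yields a per-step factor $1-2\mu\gamma+(1+\tfrac{\alpha}{n})\ell^2\gamma^2$. Under $\gamma\le\tfrac{1}{2\ell(1+\alpha/n)}$ the positive remainder is only bounded by $\tfrac{\ell\gamma}{2}$, not by anything comparable to $\mu\gamma$; whenever $\ell>4\mu$ the factor exceeds $1$ and there is no contraction at all. So this is not a matter of tuning constants to hit $\tfrac23$ --- the route fails structurally in the interesting regime $\ell\gg\mu$.

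The fix (and the paper's approach) is to keep the same cross term but split it into three equal pieces $-\tfrac{2}{3\gamma}\langle z^k-z^{k-1},F(z^k)-F(z^{k-1})\rangle$ before applying any assumption. On two of the pieces use cocoercivity in the direction $\langle z^k-z^{k-1},F_i(z^k)-F_i(z^{k-1})\rangle\ge\tfrac{1}{\ell}\|F_i(z^k)-F_i(z^{k-1})\|^2$ (and the averaged version for $F$) to \emph{cancel} $\|F(z^k)-F(z^{k-1})\|^2$ and $\tfrac{\alpha}{n}\cdot\tfrac{1}{n}\sum_i\|F_i(z^k)-F_i(z^{k-1})\|^2$ directly: the stepsize bound makes both resulting coefficients $1-\tfrac{2}{3\gamma\ell}$ and $\tfrac{\alpha}{n}-\tfrac{2}{3\gamma\ell}$ nonpositive. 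Only the third piece is spent on strong monotonicity, producing exactly $-\tfrac{2\mu}{3\gamma}\|z^k-z^{k-1}\|^2=-\tfrac{2\gamma\mu}{3}\|g^{k-1}\|^2$, which is where the constant $\tfrac23$ comes from. In short: use cocoercivity to absorb the quadratic operator-difference terms \emph{against} the inner product, not to rebound them as $\ell^2\gamma^2\|g^{k-1}\|^2$ afterwards.
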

\begin{proof}
Зафиксируем любое $s$ в Алгоритме \ref{alg:marina} и рассмотрим обновление аппроксимаций градиента $g^k$:
\begin{eqnarray*}
    \| g^{k} \|^2 &=& 
    \left\| \frac{1}{n}\sum\limits_{i=1}^n g_i^{k} \right\|^2\\
    &=& \left\|g^{k-1} + \frac{1}{n}\sum\limits_{i=1}^n \mathcal{C}\left(F_i(z^{k}) - F_i(z^{k-1})\right)\right\|^2 \\
    &=& \left\|g^{k-1} + \frac{1}{n}\sum\limits_{i=1}^n \left(F_i(z^{k}) - F_i(z^{k-1})\right)\right\|^2 \\
    & & + \left\|\frac{1}{n}\sum\limits_{i=1}^n \left(\mathcal{C}\left(F_i(z^{k}) - F_i(z^{k-1})\right) - \left(F_i(z^{k}) - F_i(z^{k-1})\right)\right)\right\|^2 \\
    & & + 2\Bigl\langle g^{k-1} + \frac{1}{n}\sum\limits_{i=1}^n \left(F_i(z^{k}) - F_i(z^{k-1})\right),\\
    & &\quad\quad\quad\quad\frac{1}{n}\sum\limits_{i=1}^n \left(\mathcal{C}\left(F_i(z^{k}) - F_i(z^{k-1})\right) - \left(F_i(z^{k}) - F_i(z^{k-1})\right)\right)\Bigr\rangle.
\end{eqnarray*}
Пользуясь Предположением \ref{as:bias}, получим, что математическое ожидание правой части скалярного произведении равно нулю. Значит, взяв математическое ожидание, обнуляется скалярное произведение. Более того, используя Предположение \ref{as:bias} для второй нормы, получаем
\begin{eqnarray*}
    & &\left\|\frac{1}{n}\sum\limits_{i=1}^n \left(\mathcal{C}\left(F_i(z^{k}) - F_i(z^{k-1})\right) - \left(F_i(z^{k}) - F_i(z^{k-1})\right)\right)\right\|^2 \\
    & &= \frac{1}{n^2}\sum\limits_{i=1}^n\left\| \mathcal{C}\left(F_i(z^{k}) - F_i(z^{k-1})\right) - \left(F_i(z^{k}) - F_i(z^{k-1})\right)\right\|^2\\
    & & \leqslant \frac{\alpha}{n^2}\sum\limits_{i=1}^n\left\|F_i(z^{k}) - F_i(z^{k-1})\right\|^2,
\end{eqnarray*}
так как все попарные скалярные произведения равны нулю. Таким образом,
\begin{eqnarray*}
    \mathbb E\| g^{k} \|^2 &\leqslant& \mathbb E\|g^{k-1}\|^2 + \left\|F(z^k) - F(z^{k-1})\right\|^2 \\
    & & + 2\left\langle g^{k-1}, F(z^k) - F(z^{k-1})\right\rangle\\
    & & + \frac{\alpha}{n^2}\sum\limits_{i=1}^n\left\|F_i(z^k) - F_i(z^{k-1})\right\|^2\\
    &=& \mathbb E\|g^{k-1}\|^2 + \left\|F(z^k) - F(z^{k-1})\right\|^2 \\
    & & - \frac{2}{\gamma}\left\langle z^{k} - z^{k-1}, F(z^k) - F(z^{k-1})\right\rangle\\
    & & + \frac{\alpha}{n^2}\sum\limits_{i=1}^n\left\|F_i(z^k) - F_i(z^{k-1})\right\|^2 \\
    &=& \mathbb E\|g^{k-1}\|^2 + \left\|F(z^k) - F(z^{k-1})\right\|^2\\
    & & + \frac{\alpha}{n^2}\sum\limits_{i=1}^n\left\|F_i(z^k) - F_i(z^{k-1})\right\|^2 \\
    & & - \frac{2}{3\gamma}\frac{1}{n}\sum\limits_{i=1}^n\left\langle z^{k} - z^{k-1}, F_i(z^k) - F_i(z^{k-1})\right\rangle \\
    & & - \frac{2}{3\gamma}\left\langle z^{k} - z^{k-1}, F(z^k) - F(z^{k-1})\right\rangle \\
    & &- \frac{2}{3\gamma}\left\langle z^{k} - z^{k-1}, F(z^k) - F(z^{k-1})\right\rangle.
\end{eqnarray*}
Для первого и второго скалярного произведения используется Предположение \ref{as:coerc}, а именно $\left\langle z^k - z^{k-1}, F_i(z^k) - F_i(z^{k-1})\right\rangle \geqslant \frac{1}{\ell}\left\|F_i(z^k) - F_i(z^{k-1})\right\|^2$, а для третьего -- Предположение \ref{as:strmon}, а именно \\$\left\langle z^k - z^{k-1}, F(z^k) - F(z^{k-1})\right\rangle \geqslant \mu\left\|F(z^k) - F(z^{k-1})\right\|^2$. Тогда
\begin{eqnarray*}
    \mathbb E\| g^{k} \|^2 &\leqslant& \mathbb E\|g^{k-1}\|^2 + \left(1 - \frac{2}{3\gamma \ell} \right) \left\|F(z^k) - F(z^{k-1})\right\|^2\\ 
    & & + \left(\frac{\alpha}{n} - \frac{2}{3\gamma \ell} \right) \frac{1}{n}\sum\limits_{i=1}^n\left\|F_i(z^k) - F_i(z^{k-1})\right\|^2\\
    & & - \frac{2\mu}{3\gamma}\|z^k - z^{k-1}\|^2.
\end{eqnarray*}
Выберем $\gamma \leqslant \frac{1}{2\ell\left(1 + \frac{\alpha}{n}\right)}$. С учетом этого, получим
\begin{eqnarray*}
    \mathbb E\| g^{k} \|^2 &\leqslant& \left(1 - \frac{2\gamma\mu}{3}\right)\mathbb E\|g^{k-1}\|^2.
\end{eqnarray*}
Запустим рекурсию до первой итерации в эпохе и учтем, что $g^0 = F(z^0)$:
\begin{eqnarray*}
    \mathbb E\| g^{K} \|^2 \leqslant \left(1 - \frac{2\gamma\mu}{3}\right)^{K}\mathbb E\|F(z^0)\|^2.
\end{eqnarray*}
\end{proof}

Следующая лемма дает нам представление о разнице между полным оператором $F(\cdot)$ и его сжатой версией $g$ в течение внутреннего цикла Алгоритма \ref{alg:marina}. 

\begin{lemma} \label{lem:2}
Пусть выполнены Предположения \ref{as:coerc}, \ref{as:strmon}, \ref{as:bias}. Тогда для Алгоритма \ref{alg:marina} верна следующая оценка:
\begin{eqnarray*}
    \mathbb E \left\|F(z^K) - g^{K}\right\|^2 \leqslant \frac{\gamma\ell\left(1+\frac{\alpha}{n}\right)}{1 - \gamma\ell\left(1+\frac{\alpha}{n}\right)} \mathbb E\|F(z^0)\|^2.
\end{eqnarray*}
\begin{proof}
Рассмотрим следующую цепочку:
\begin{eqnarray}
    \notag\mathbb E \left\|F(z^k) - g^k\right\|^2 &=& \mathbb E \left\|\left[F(z^{k-1}) - g^{k-1}\right] + \left[F(z^k) - F(z^{k-1})\right] - \left[g^k - g^{k-1}\right]\right\|^2\\
    \notag&=& \mathbb E \left\|F(z^{k-1}) - g^{k-1}\right\|^2 + \mathbb E \left\|F(z^k) - F(z^{k-1})\right\|^2 \\
    \notag& & + \mathbb E \left\|g^k - g^{k-1}\right\|^2 + 2\mathbb E\left\langle F(z^{k-1}) - g^{k-1}, F(z^k) - F(z^{k-1})\right\rangle \\
    \notag& & - 2\mathbb E\left\langle F(z^{k-1}) - g^{k-1}, g^k - g^{k-1}\right\rangle \\
    \label{l2:eq1}& & - 2\mathbb E\left\langle F(z^k) - F(z^{k-1}), g^k - g^{k-1}\right\rangle.
\end{eqnarray}
Теперь отдельно оценим второе скалярное произведение:
\begin{eqnarray*}
    & &\mathbb E\left\langle F(z^{k-1}) - g^{k-1}, g^k - g^{k-1}\right\rangle \\
    & & \quad\quad= \mathbb E\left\langle F(z^{k-1}) - g^{k-1}, \frac{1}{n}\sum\limits_{i=1}^n\mathcal{C}\left(F_i(z^k) - F_i(z^{k-1})\right)\right\rangle\\
    & & \quad\quad= \mathbb E\left\langle F(z^{k-1}) - g^{k-1}, \frac{1}{n}\sum\limits_{i=1}^n \left(F_i(z^k) - F_i(z^{k-1})\right)\right\rangle \\
    & & \quad\quad\quad+ \mathbb E\Bigl\langle F(z^{k-1}) - g^{k-1},\\
    & & \quad\quad\quad\quad\quad\frac{1}{n}\sum\limits_{i=1}^n\left(\mathcal{C}\left(F_i(z^k) - F_i(z^{k-1})\right) - \left(F_i(z^k) - F_i(z^{k-1})\right)\right)\Bigr\rangle.
\end{eqnarray*}
Так как второе скалярное произведение равно нулю, ввиду Предположения \ref{as:bias} получим
\begin{eqnarray}
\label{l2:eq2}
    \mathbb E\left\langle F(z^{k-1}) - g^{k-1}, g^k - g^{k-1}\right\rangle &=& \mathbb E\left\langle F(z^{k-1}) - g^{k-1}, F(z^k) - F(z^{k-1})\right\rangle.
    \quad~~
\end{eqnarray}
Делая аналогичные выкладки, можно оценить третье скалярное произведение в \eqref{l2:eq1}, как
\begin{eqnarray}
    \notag\mathbb E\left\langle F(z^k) - F(z^{k-1}), g^k - g^{k-1}\right\rangle &=& \mathbb E\left\langle F(z^k) - F(z^{k-1}), F(z^k) - F(z^{k-1})\right\rangle\\
    \label{l2:eq3}&=& \left\|F(z^k) - F(z^{k-1})\right\|^2.
\end{eqnarray}
Подставляя \eqref{l2:eq2} и \eqref{l2:eq3} в \eqref{l2:eq1}, получим
\begin{eqnarray*}
    \mathbb E \left\|F(z^k) - g^k\right\|^2 &=& \mathbb E \left\|F(z^{k-1}) - g^{k-1}\right\|^2 + \mathbb E \left\|F(z^k) - F(z^{k-1})\right\|^2 \\
    & & + \mathbb E \left\|g^k - g^{k-1}\right\|^2 \\
    & & + 2\mathbb E\left\langle F(z^{k-1}) - g^{k-1}, F(z^k) - F(z^{k-1})\right\rangle \\
    & & - 2\mathbb E\left\langle F(z^{k-1}) - g^{k-1}, F(z^k) - F(z^{k-1})\right\rangle \\
    & & - 2\mathbb E\left\|F(z^k) - F(z^{k-1})\right\|^2\\
    &\leqslant& \mathbb E \left\|F(z^{k-1}) - g^{k-1}\right\|^2 + \mathbb E \left\|g^k - g^{k-1}\right\|^2.
\end{eqnarray*}
Запустим рекурсию до первой итерации в эпохе и учтем, что $g^0 = F(z^0)$:
\begin{eqnarray}
\label{l2:eq4}
    \mathbb E\left\|F(z^K) - g^{K} \right\|^2 \leqslant \sum\limits_{k=1}^K\mathbb E\|g^k - g^{k-1}\|^2.
\end{eqnarray}
Пользуясь Предположением \ref{as:bias},
\begin{eqnarray}
    \notag\mathbb E\|g^k - g^{k-1}\|^2 &=& \mathbb E\left\|\frac{1}{n}\sum\limits_{i=1}^n\mathcal{C}\left(F_i(z^k) - F_i(z^{k-1}\right)\right\|^2\\
    \label{l2:eq5}&\leqslant& \frac{1+\frac{\alpha}{n}}{n}\sum\limits_{i=1}^n\left\|F_i(z^k) - F_i(z^{k-1})\right\|^2.
\end{eqnarray}
Далее, аналогично доказательству Леммы \ref{lem:1}:
\begin{eqnarray*}
    \mathbb E\| g^{k} \|^2 &=& \mathbb E\left\|g^{k-1} + \frac{1}{n}\sum\limits_{i=1}^n \mathcal{C}\left(F_i(z^{k}) - F_i(z^{k-1})\right)\right\|^2 \\
    &\leqslant& \mathbb E\left\|g^{k-1} + \frac{1}{n}\sum\limits_{i=1}^n \left(F_i(z^{k}) - F_i(z^{k-1})\right)\right\|^2 \\
    & & + \frac{\alpha}{n^2}\sum\limits_{i=1}^n\left\|F_i(z^{k}) - F_i(z^{k-1})\right\|^2\\
    &\leqslant& \mathbb E\|g^{k-1}\|^2 + \frac{1+\frac{\alpha}{n}}{n}\sum\limits_{i=1}^n\left\|F_i(z^k) - F_i(z^{k-1})\right\|^2\\
    & & + 2\left\langle g^{k-1}, F(z^k) - F(z^{k-1}\right\rangle\\
    &=& \mathbb E\|g^{k-1}\|^2 + \frac{1+\frac{\alpha}{n}}{n}\sum\limits_{i=1}^n\left\|F_i(z^k) - F_i(z^{k-1})\right\|^2 \\
    & & - \frac{2}{\gamma}\left\langle z^{k} - z^{k-1}, F(z^k) - F(z^{k-1}\right\rangle\\
    &=& \mathbb E\|g^{k-1}\|^2 + \frac{1+\frac{\alpha}{n}}{n}\sum\limits_{i=1}^n\left\|F_i(z^k) - F_i(z^{k-1})\right\|^2 \\
    & & - \frac{1}{\gamma}\frac{1}{n}\sum\limits_{i=1}^n\left\langle z^{k} - z^{k-1}, F_i(z^k) - F_i(z^{k-1})\right\rangle \\
    & &- \frac{1}{\gamma}\left\langle z^{k} - z^{k-1}, F(z^k) - F(z^{k-1})\right\rangle\\
    &\leqslant& (1-\gamma\mu)\mathbb E\|g^{k-1}\|^2 \\
    & & + \left(1 - \frac{1}{\gamma\ell\left(1+\frac{\alpha}{n}\right)}\right)\frac{1+\frac{\alpha}{n}}{n}\sum\limits_{i=1}^n\left\|F_i(z^k) - F_i(z^{k-1})\right\|^2.
\end{eqnarray*}
Выражая сумму,
\begin{eqnarray*}
    & &\frac{1+\frac{\alpha}{n}}{n}\sum\limits_{i=1}^n\left\|F_i(z^k) - F_i(z^{k-1})\right\|^2 \\
    & & \quad\quad\quad\quad\leqslant \frac{\gamma\ell\left(1+\frac{\alpha}{n}\right)}{1 - \gamma\ell\left(1+\frac{\alpha}{n}\right)} \left[(1-\gamma\mu)\mathbb E\|g^{k-1}\|^2 - \mathbb E\|g^{k-1}\|^2\right]\\
    & & \quad\quad\quad\quad\leqslant\frac{\gamma\ell\left(1+\frac{\alpha}{n}\right)}{1 - \gamma\ell\left(1+\frac{\alpha}{n}\right)}\left[\mathbb E\|g^{k-1}\|^2 - \mathbb E\|g^{k}\|^2\right].
\end{eqnarray*}
Комбинируя полученную оценку с \eqref{l2:eq5} и \eqref{l2:eq4}, получаем
\begin{eqnarray*}
    \mathbb E \left\|F(z^K) - g^{K}\right\|^2 \leqslant \frac{\gamma\ell\left(1+\frac{\alpha}{n}\right)}{1 - \gamma\ell\left(1+\frac{\alpha}{n}\right)} \mathbb E\|g^{0}\|^2 = \frac{\gamma\ell\left(1+\frac{\alpha}{n}\right)}{1 - \gamma\ell\left(1+\frac{\alpha}{n}\right)} \mathbb E\|F(z^0)\|^2.
\end{eqnarray*}
\end{proof}
\end{lemma}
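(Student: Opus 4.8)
The plan is to follow the error term $e^k := F(z^k) - g^k$ through one epoch of the inner loop. Since every epoch is re-initialised with $g^0 = F(z^0)$ we have $e^0 = 0$, so it suffices to show that the error accumulated over $K$ inner steps is bounded by a quantity proportional to $\mathbb E\|F(z^0)\|^2$ whose prefactor degenerates as $\gamma \to 0$; the geometric one-step structure already established in the proof of Lemma \ref{lem:1} will be reused to produce exactly such a bound.

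First I would write $e^k = e^{k-1} + \left(F(z^k) - F(z^{k-1})\right) - \left(g^k - g^{k-1}\right)$, expand $\|e^k\|^2$ and take expectation. The key observation is that $z^{k-1}$, $z^k = z^{k-1} - \gamma g^{k-1}$ and $g^{k-1}$ are all measurable with respect to the history preceding the $k$-th round of compression, while the compressors used at step $k$ are unbiased and mutually independent across devices (Assumption \ref{as:bias}); hence $\mathbb E\left[g^k - g^{k-1} \mid \text{history}\right] = F(z^k) - F(z^{k-1})$. Substituting this into the two inner products that contain $g^k - g^{k-1}$ collapses them onto $F(z^k) - F(z^{k-1})$, the two surviving copies of $\langle e^{k-1}, F(z^k) - F(z^{k-1})\rangle$ cancel, and $\mathbb E\|F(z^k) - F(z^{k-1})\|^2$ is left with a negative coefficient. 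This yields the clean recursion $\mathbb E\|e^k\|^2 \leqslant \mathbb E\|e^{k-1}\|^2 + \mathbb E\|g^k - g^{k-1}\|^2$, which unrolls (using $e^0 = 0$) to $\mathbb E\|F(z^K) - g^K\|^2 \leqslant \sum_{k=1}^K \mathbb E\|g^k - g^{k-1}\|^2$.

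It then remains to bound the sum of increments. Unbiasedness, independence and convexity of $\|\cdot\|^2$ give $\mathbb E\|g^k - g^{k-1}\|^2 \leqslant \tfrac{1+\alpha/n}{n}\sum_{i=1}^n \|F_i(z^k) - F_i(z^{k-1})\|^2$, so I need to control $\sum_{k}\tfrac{1+\alpha/n}{n}\sum_i\|F_i(z^k)-F_i(z^{k-1})\|^2$. For this I would rerun the one-step computation behind Lemma \ref{lem:1}, but \emph{without} discarding the term $\tfrac1n\sum_i\|F_i(z^k)-F_i(z^{k-1})\|^2$: rewriting $2\langle g^{k-1}, F(z^k)-F(z^{k-1})\rangle = -\tfrac2\gamma\langle z^k - z^{k-1}, F(z^k)-F(z^{k-1})\rangle$, splitting the factor $\tfrac2\gamma$ into two equal halves, and applying $\ell$-cocoercivity (Assumption \ref{as:coerc}) device-by-device to one half and $\mu$-strong monotonicity (Assumption \ref{as:strmon}) together with $z^k - z^{k-1} = -\gamma g^{k-1}$ to the other, I obtain
\begin{eqnarray*}
\mathbb E\|g^k\|^2 \leqslant (1-\gamma\mu)\,\mathbb E\|g^{k-1}\|^2 + \left(1 - \frac{1}{\gamma\ell(1+\alpha/n)}\right)\frac{1+\alpha/n}{n}\sum_{i=1}^n\|F_i(z^k)-F_i(z^{k-1})\|^2.
\end{eqnarray*}
Under the step-size restriction $\gamma \leqslant \tfrac{1}{2\ell(1+\alpha/n)}$ inherited from Lemma \ref{lem:1} the bracket $1 - \tfrac{1}{\gamma\ell(1+\alpha/n)}$ is negative, so isolating the sum and using $\gamma\mu \geqslant 0$ gives $\tfrac{1+\alpha/n}{n}\sum_i\|F_i(z^k)-F_i(z^{k-1})\|^2 \leqslant \tfrac{\gamma\ell(1+\alpha/n)}{1-\gamma\ell(1+\alpha/n)}\left(\mathbb E\|g^{k-1}\|^2 - \mathbb E\|g^k\|^2\right)$. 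Summing over $k$ telescopes, $-\mathbb E\|g^K\|^2 \leqslant 0$ is dropped, and $g^0 = F(z^0)$ is substituted, which delivers exactly the claimed bound.

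I expect the main obstacle to be the sign bookkeeping in this last step: the coefficient in front of $\sum_i\|F_i(z^k)-F_i(z^{k-1})\|^2$ flips sign precisely because of the step-size condition, so one must track carefully which direction the inequality points after isolating the sum, and the $\tfrac2\gamma = \tfrac1\gamma + \tfrac1\gamma$ split must be calibrated so that the leftover $\tfrac{1+\alpha/n}{n}\sum_i\|F_i(z^k)-F_i(z^{k-1})\|^2$ matches exactly the quantity produced by the compression-variance estimate. The measurability argument that kills the cross terms in the first step is routine but should be stated explicitly, because $z^k$ depends on $g^{k-1}$ and hence on the compression noise of the previous round.
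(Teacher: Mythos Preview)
Your proposal is correct and follows essentially the same route as the paper's own proof: the same decomposition of $e^k$, the same use of unbiasedness to cancel the cross terms and obtain $\mathbb E\|e^k\|^2 \leqslant \mathbb E\|e^{k-1}\|^2 + \mathbb E\|g^k-g^{k-1}\|^2$, the same compression-variance bound $\mathbb E\|g^k-g^{k-1}\|^2 \leqslant \tfrac{1+\alpha/n}{n}\sum_i\|F_i(z^k)-F_i(z^{k-1})\|^2$, and the same reworking of the Lemma~\ref{lem:1} one-step estimate with the $\tfrac2\gamma = \tfrac1\gamma + \tfrac1\gamma$ split to isolate the sum and telescope. The step-size requirement $\gamma\ell(1+\alpha/n)<1$ that you flag is indeed used (though left implicit in the lemma's statement) and your bookkeeping of the sign flip is exactly what the paper does.
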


Теперь объединим результаты Леммы \ref{lem:1} и \ref{lem:2} и получим основную теорему данной статьи.

\begin{theorem} \label{th:1}
Пусть выполнены Предположения \ref{as:coerc}, \ref{as:strmon}, \ref{as:bias}. Тогда для Алгоритма \ref{alg:marina} c $\gamma =\frac{1}{8\ell\left(1+\frac{\alpha}{n}\right)}$ и $K = \frac{30\ell\left(1 + \frac{\alpha}{n}\right)}{\mu}$ верна следующая оценка:
\begin{eqnarray*}
    \mathbb E\| F(\tilde z^s) \|^2 \leqslant \frac{1}{2}\mathbb E\|F(\tilde z^{{s-1}})\|^2.
\end{eqnarray*}
\end{theorem}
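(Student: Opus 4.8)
The plan is to work inside a single epoch $s$, where by construction $z^0 = \tilde z^{s-1}$ and $z^K = \tilde z^s$, so the claim is exactly an estimate of $\mathbb E\|F(z^K)\|^2$ in terms of $\mathbb E\|F(z^0)\|^2$. The natural first step is to split the quantity of interest by inserting the compressed surrogate $g^K$ and applying the elementary inequality $\|a+b\|^2 \le 2\|a\|^2 + 2\|b\|^2$:
\begin{eqnarray*}
\mathbb E\|F(z^K)\|^2 \;\le\; 2\,\mathbb E\big\|F(z^K) - g^K\big\|^2 + 2\,\mathbb E\|g^K\|^2.
\end{eqnarray*}
Now the two terms on the right are precisely what Lemma~\ref{lem:2} and Lemma~\ref{lem:1} control, so the rest is bookkeeping with the prescribed parameters.

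For the first term I would invoke Lemma~\ref{lem:2}: with $\gamma = \tfrac{1}{8\ell(1+\alpha/n)}$ we have $\gamma\ell(1+\alpha/n) = \tfrac18$, hence $\tfrac{\gamma\ell(1+\alpha/n)}{1-\gamma\ell(1+\alpha/n)} = \tfrac{1/8}{7/8} = \tfrac17$, giving $\mathbb E\|F(z^K) - g^K\|^2 \le \tfrac17\,\mathbb E\|F(z^0)\|^2$. One should first check that this choice of $\gamma$ is admissible for Lemma~\ref{lem:1} as well, i.e. $\gamma \le \tfrac{1}{2\ell(1+\alpha/n)}$, which is immediate. For the second term Lemma~\ref{lem:1} yields $\mathbb E\|g^K\|^2 \le \big(1 - \tfrac{2\gamma\mu}{3}\big)^K \mathbb E\|F(z^0)\|^2$.

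The only slightly delicate point is turning the geometric factor into a number. Using $1 - x \le e^{-x}$ for $x = \tfrac{2\gamma\mu}{3} \in (0,1)$ gives $\big(1 - \tfrac{2\gamma\mu}{3}\big)^K \le \exp\!\big(-\tfrac{2\gamma\mu K}{3}\big)$, and substituting $\gamma = \tfrac{1}{8\ell(1+\alpha/n)}$ and $K = \tfrac{30\ell(1+\alpha/n)}{\mu}$ (or, if $K$ must be an integer, any value at least this large, which only improves the bound since the base lies in $(0,1)$) collapses everything: $\tfrac{2\gamma\mu K}{3} = \tfrac{2}{3}\cdot\tfrac{30}{8} = \tfrac52$, so $\mathbb E\|g^K\|^2 \le e^{-5/2}\,\mathbb E\|F(z^0)\|^2$. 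Combining,
\begin{eqnarray*}
\mathbb E\|F(\tilde z^s)\|^2 \;\le\; \Big(\tfrac27 + 2e^{-5/2}\Big)\,\mathbb E\|F(\tilde z^{s-1})\|^2,
\end{eqnarray*}
and since $\tfrac27 + 2e^{-5/2} \approx 0.286 + 0.164 = 0.450 < \tfrac12$, the theorem follows.

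I do not expect a genuine obstacle here: the argument is a clean combination of the two lemmas, and the "hard part" is merely confirming that the chosen $(\gamma,K)$ simultaneously satisfy the step-size restriction of Lemma~\ref{lem:1}, keep the denominator in Lemma~\ref{lem:2} positive (indeed $\gamma\ell(1+\alpha/n)=\tfrac18<1$), and make the final constant fall below $\tfrac12$ — all of which reduce to the arithmetic above. If one wanted cleaner constants, the same scheme works for a range of $\gamma \asymp 1/(\ell(1+\alpha/n))$ and $K \asymp \ell(1+\alpha/n)/\mu$, trading off the $\tfrac17$ contribution from Lemma~\ref{lem:2} against the exponential contribution from Lemma~\ref{lem:1}.
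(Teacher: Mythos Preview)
Your proposal is correct and follows essentially the same route as the paper: split $\mathbb E\|F(z^K)\|^2 \le 2\mathbb E\|F(z^K)-g^K\|^2 + 2\mathbb E\|g^K\|^2$, apply Lemmas~\ref{lem:2} and~\ref{lem:1}, bound $(1-\tfrac{2\gamma\mu}{3})^K$ by $\exp(-\tfrac{2\gamma\mu K}{3})$, and plug in the prescribed $\gamma$ and $K$. Your explicit arithmetic ($\tfrac{2}{7}+2e^{-5/2}<\tfrac12$) is exactly the verification the paper leaves implicit.
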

\begin{proof}
Начнем с
\begin{eqnarray*}
    \mathbb E\left\| F (z^K) \right\|^2 \leqslant 2 \mathbb E\left\| F(z^K) - g^K \right\|^2 + 2 \mathbb E\left\| g^K \right\|^2.
\end{eqnarray*}
Далее применим Леммы \ref{lem:1}, \ref{lem:2}:
\begin{eqnarray*}
    \mathbb E\left\| F (z^K) \right\|^2 &\leqslant& \left[2\frac{\gamma\ell\left(1+\frac{\alpha}{n}\right)}{1 - \gamma\ell\left(1+\frac{\alpha}{n}\right)} + 2\left(1 - \frac{2\gamma\mu}{3}\right)^K\right]\mathbb E\left\| F (z^0) \right\|^2\\
    &\leqslant& \left[2\frac{\gamma\ell\left(1+\frac{\alpha}{n}\right)}{1 - \gamma\ell\left(1+\frac{\alpha}{n}\right)} + 2\exp\left(-\frac{2}{3}\gamma\mu K\right)\right]\mathbb E\left\| F (z^0) \right\|^2.
\end{eqnarray*}
Здесь использовалось, что $\gamma \mu \in (0;1)$ и $(1 - \gamma \mu)\leq \exp(-\gamma\mu)$. Подставив $\gamma = \frac{1}{8\ell\left(1+\frac{\alpha}{n}\right)}$ и $K = \frac{30\ell\left(1+\frac{\alpha}{n}\right)}{\mu}$, получаем оценку
\begin{align*}
    \mathbb E\left\| F (z^K) \right\|^2 &\leqslant \frac{1}{2} \mathbb E\left\| F (z^0) \right\|^2.
\end{align*}
Так как $z^0 = \tilde z^{s-1}$ и $z^K = \tilde z^s$, была получена сходимость за одну эпоху
\begin{align*}
    \mathbb E\left\| F (\tilde z^s) \right\|^2 &\leqslant \frac{1}{2} \mathbb E\left\| F (\tilde z^{s-1}) \right\|^2.
\end{align*}
\end{proof}

\begin{corollary}\label{cor:1}
Пусть выполнены Предположения \ref{as:coerc}, \ref{as:strmon}, \ref{as:bias}, \ref{as:comp}. Тогда Алгоритм \ref{alg:marina} с $\gamma = \frac{1}{8\ell\left(1+\frac{\alpha}{n}\right)}$ и $K = \frac{30\ell\left(1+\frac{\alpha}{n}\right)}{\mu}$ для достижения $\varepsilon$-точности, где $\varepsilon^2\sim\mathbb E\left\|F(\tilde z^S)\right\|^2$, требует
\begin{align*}
    \mathcal{O}\left( \left[1 + \delta\frac{\ell}{\mu}\left(1 + \frac{\alpha}{n}\right) \right]\log_2 \frac{\| F(z^0)\|^2}{\varepsilon^2}\right) 
\end{align*}
пересылок градиента для каждого устройства.
\end{corollary}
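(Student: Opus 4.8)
Мой план состоит в том, чтобы проитерировать сжимающую оценку из Теоремы~\ref{th:1} по эпохам, а затем перевести необходимое число эпох в число пересылок градиента, осуществляемых одним устройством. Сначала я бы применил Теорему~\ref{th:1} последовательно $S$ раз; поскольку $\tilde z^0 = z^0$ детерминирован, это немедленно даёт
\begin{align*}
    \mathbb E\|F(\tilde z^S)\|^2 \leqslant \left(\tfrac{1}{2}\right)^S \|F(z^0)\|^2.
\end{align*}
Чтобы правая часть не превосходила $\varepsilon^2$, достаточно взять $S = \left\lceil \log_2 \tfrac{\|F(z^0)\|^2}{\varepsilon^2}\right\rceil = \mathcal{O}\!\left(\log_2 \tfrac{\|F(z^0)\|^2}{\varepsilon^2}\right)$ эпох.

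Затем я бы подсчитал количество пересылок на одно устройство в течение одной эпохи. В начале эпохи вычисляется $g^0 = F(z^0)$, для чего каждое устройство передаёт полный локальный оператор $F_i(z^0)$ — это одна «полная» пересылка. Далее внутренний цикл состоит из $K-1$ итераций, на каждой из которых устройство $i$ отправляет $g_i^k = g^{k-1} + \mathcal{C}\!\left(F_i(z^k) - F_i(z^{k-1})\right)$; так как вектор $g^{k-1}$ уже известен серверу (он сам его разослал), по существу передаётся лишь сжатая разность $\mathcal{C}\!\left(F_i(z^k) - F_i(z^{k-1})\right)$, а по Предположению~\ref{as:comp} её передача стоит долю $\delta$ от полной. Итого за эпоху одно устройство осуществляет $1 + (K-1)\delta$ пересылок в единицах полного градиента.

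Наконец, перемножая число эпох на стоимость одной эпохи и подставляя $K = \tfrac{30\ell(1+\alpha/n)}{\mu}$ (так что $(K-1)\delta = \mathcal{O}\!\left(\delta\tfrac{\ell}{\mu}\left(1 + \tfrac{\alpha}{n}\right)\right)$), я бы получил
\begin{align*}
    S\bigl(1 + (K-1)\delta\bigr) = \mathcal{O}\!\left(\left(1 + \delta\tfrac{\ell}{\mu}\left(1 + \tfrac{\alpha}{n}\right)\right)\log_2 \tfrac{\|F(z^0)\|^2}{\varepsilon^2}\right),
\end{align*}
что и есть требуемая оценка. Основная (хотя и не вычислительная) трудность здесь — корректно формализовать модель стоимости коммуникаций: нужно аккуратно обосновать, что каждая пересылка во внутреннем цикле обходится лишь в долю $\delta$ полного вектора благодаря Предположению~\ref{as:comp} и тому, что $g^{k-1}$ уже доступно серверу, тогда как дорогая синхронизация $g^0 = F(z^0)$ выполняется только раз в эпоху; всё остальное сводится к подстановке параметров и переходу к $\mathcal{O}$-нотации.
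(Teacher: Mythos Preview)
Your proposal is correct and follows essentially the same route as the paper: iterate the contraction from Theorem~\ref{th:1} to get $S=\mathcal{O}\bigl(\log_2\|F(z^0)\|^2/\varepsilon^2\bigr)$ epochs, count one full transmission plus $(K-1)$ compressed ones of cost $\delta$ per epoch, and multiply. Your extra remark that the server already knows $g^{k-1}$ (so only the compressed difference needs to be sent) is a welcome clarification the paper leaves implicit.
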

\begin{proof}
Из Теоремы \ref{th:1}:
\begin{eqnarray*}
    \mathbb E\left\| F (\tilde z^S) \right\|^2 &\leqslant (\frac{1}{2})^S \mathbb E\left\| F (z^0) \right\|^2.
\end{eqnarray*}
Тогда для достижении точности $\varepsilon^2\sim\mathbb E\left\|F(\tilde z^S)\right\|^2$ нам нужно следующее число внешних итераций (эпох) S:
\begin{align*}
    S = \mathcal{O}\left( \log_2 \frac{\| F(z^0)\|^2}{\varepsilon^2}\right).
\end{align*}
На каждой внешней итерации полный оператор  пересылается только один раз, а остальные $K-1$ внутренних итераций используется сжатые версии размера $\delta$ (Предположение \ref{as:comp}). Тогда каждое устройство пересылает следующее количество градиентов:
\begin{align*}
    S \times \left(1 + \delta \times (K-1)\right) = \mathcal{O}\left( \left[1 + \delta\frac{\ell}{\mu}\left(1 + \frac{\alpha}{n}\right) \right]\log_2 \frac{\| F(z^0)\|^2}{\varepsilon^2}\right).
\end{align*}
\end{proof}
\begin{remark}
Выбирая $\delta \leqslant \frac{1}{\alpha}$ и $\alpha = n$, Следствие \ref{cor:1} дает следующую оценку на сложность коммуникаций:
\begin{align*}
    \mathcal{O}\left( \left[1 + \frac{\ell}{\mu n} \right]\log_2 \frac{\| F(z^0)\|^2}{\varepsilon^2}\right). 
\end{align*}
\end{remark}

Сравнивая полученный результат с другими методами, отметим, что в работе \citep{beznosikov2023stochastic} была получена такая же оценка сходимости метода \textsc{DIANA} для кокоэрсивных вариационных неравенств.

\section{Эксперименты}

В данном разделе демонстрируется практическое применение предложенного алгоритма, проводя серию экспериментов. Наша основная цель — оценить эффективность различных методов для решения вариационных неравенств с кокоэрсивными свойствами. 
В частности, проводится сравнение производительности метода с несмещенной компрессией, метода с квантизацией \citep{jacob2018quantization} и метода без компрессии для алгоритма \textsc{MARINA}.

Рассмотрим задачу седловой точки конечной суммы, определяемую следующим образом:
\[
    g(x, y) = \frac{1}{n} \sum_{i=1}^n \left[x^\top A_i y + a_i^\top x + b_i^\top y + \frac{\lambda}{2} \|x\|^2 - \frac{\lambda}{2} \|y\|^2\right],
\]
где $A_i \in \mathbb{R}^{d \times d}$, $a_i, b_i \in \mathbb{R}^d$. Данная задача является $\lambda$-сильно выпуклой по $x$ и $\lambda$-сильно вогнутой по $y$, а также $L$-гладкой с $L = \|A\|_2$, где $A = \frac{1}{n} \sum_{i=1}^n A_i$.

Положим $n = 10$, $d = 100$, и $\lambda = 1$. Матрицы $A_i$ и векторы $a_i$, $b_i$ генерируются случайным образом. Примечательно, что константа кокоэрсивности для данной задачи определяется как $\ell = \frac{\|A\|_2^2}{\lambda}$. Эти параметры позволяют исследовать поведение алгоритмов в различных условиях.

В качестве критерия возьмем квадрат нормы градиента на $k-$ой итерации, отнесенного к квадрату нормы функционала на первой итерации: $\left\|\frac{F(z^k)}{F(z^0)}\right\|^2$. Обозначим $\delta = \frac{K}{d}$ -- отношение числа координат в компрессии, которые выбираются, к общей размерности задачи. Для экспериментов использовалась квантизация для перевода чисел из формата fp-32 в формат int-8, что позволяет значительно оптимизировать модели за счет уменьшения размера весов модели в 4 раза и того, что многие процессоры эффективнее обрабатывают 8-битные данные \citep{krishnamoorthi2018quantizing, wu2020integer}. Метод квантизации для удобства на графиках обозначаем \textsc{Q-MARINA}. По оси абсцисс отображается объем передаваемой информации в килобитах, что позволяет оценить эффективность алгоритмов с учетом ограничений на передачу данных. 
Для комплексной оценки методов проектируются три экспериментальных сценария с разными уровнями кокоэрсивности: низким ($\ell \approx 10^2$), средним ($\ell \approx 10^3$) и высоким ($\ell \approx 10^4$).

\begin{figure}[h]
\includegraphics[width=\textwidth]{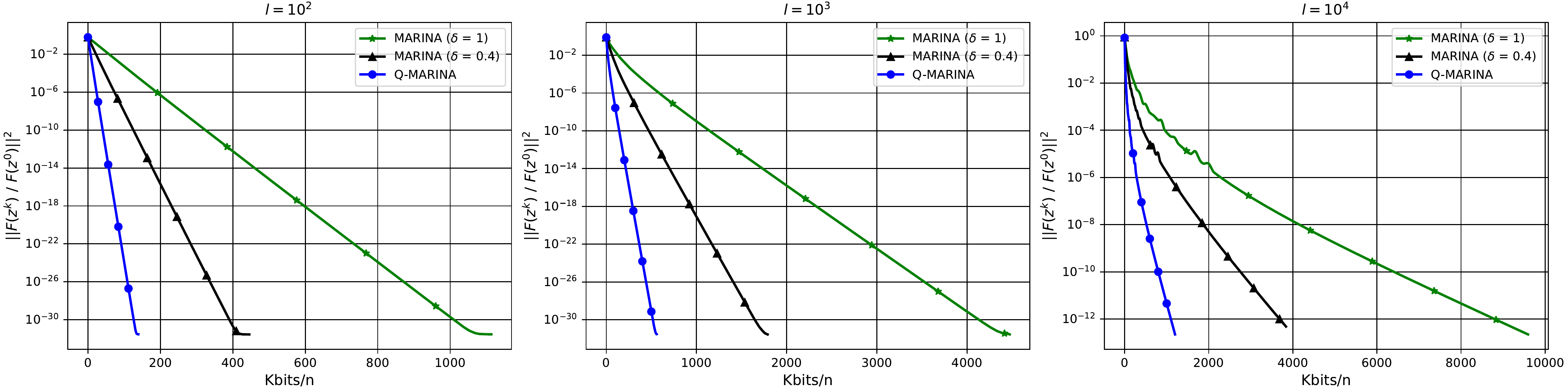}
\caption{\small Сравнение производительности методов на основе MARINA для кокоэрсивных вариационных неравенств на билинейной задаче седловой точки.}
    \label{fig:min}
\end{figure}
Результаты экспериментов представлены на Рис.~\ref{fig:min}. Как видно из графиков, методы \textsc{MARINA} с компрессией стабильно превосходит методы без нее во всех сценариях. В то время как сжатие \textsc{Rand-K} демонстрирует приемлемую производительность, оно уступает квантизации. В свою очередь, метод без сжатия показывает значительно более медленную сходимость с точки зрения объемов передаваемой информации, что подчеркивает его ограничения при решении задач больших размерностей.

Таким образом, эксперименты подтверждают эффективность предложенного подхода к решению вариационных неравенств. Метод \textsc{MARINA} с компрессией обеспечивает значительное сокращение объемов передаваемой информации при сохранении скорости сходимости. Это делает его перспективным инструментом для распределенных систем, где ограниченные ресурсы передачи данных являются критическим фактором.

\section*{Финансирование}

Работа выполнена в Лаборатории проблем федеративного обучения ИСП РАН при поддержке  Минобрнауки России (д.с. № 2 от «19» апреля 2024 г. к Соглашению № 075-03-2024-214 от «18» января 2024 г.)

\bibliographystyle{plainnat}
\bibliography{arxiv}

\end{document}